\numberwithin{equation}{section}
\theoremstyle{plain}
\newtheorem{theorem}{Theorem}[section]
\newtheorem{corollary}[theorem]{Corollary}
\newtheorem{proposition}[theorem]{Proposition}
\theoremstyle{definition}
\newtheorem{Def}[theorem]{Definition}
\newtheorem{example}[theorem]{Example}
\newtheorem{remark}[theorem]{Remark}
\newtheorem{?}[theorem]{Problem}
\newcommand{\A}{\mathcal{A}}
\newcommand{\B}{\mathcal{B}}
\newcommand{\D}{\mathcal{D}}
\renewcommand{\L}{\mathcal{L}}
\renewcommand{\P}{\mathcal{P}}
\newcommand{\W}{\mathcal{W}}
\begin{document}

\title{Combinatorial proofs and refinements of three partition theorems of Andrews}

\author[S. Fu]{Shishuo Fu}
\address[Shishuo Fu]{College of Mathematics and Statistics, Chongqing University, Chongqing 401331, P.R. China}
\email{fsshuo@cqu.edu.cn}

\date{\today}

\begin{abstract}
In his recent work, Andrews revisited two-color partitions with certain restrictions on the differences between consecutive parts, and he established three theorems linking these two-color partitions with more familiar kinds of partitions. In this note, we provide bijective proofs as well as refinements of those three theorems of Andrews. Our refinements take into account the numbers of parts in each of the two colors.
\end{abstract}

% \keywords{Integer partition, basis partition, bijective combinatorics, Ferrers graph.
% \newline \indent 2020 {\it Mathematics Subject Classification}. Primary 05A05, 15A15; Secondary 05A15, 05A19, 05E18, 11A55, 11B68.}

\maketitle
\section{Introduction}

In his recent paper \cite{and21} published in the Hardy-Ramanujan Journal, Andrews revisited two-color partitions and obtained three new partition theorems that are reminiscent of the two theorems on two-color partitions proved in his 1987 paper \cite{and87}. In order to state these new theorems, we first recall some definitions in the theory of partitions.

Given a non-negative integer $n$, a {\it partition}, say $\lambda$, of $n$ is a non-increasing list of positive integers that sum up to $n$. We shall write $\lambda=\lambda_1+\lambda_2+\cdots$ with $\lambda_1\ge \lambda_2\ge\cdots$, where $|\lambda|=n$ might also be referred to as the {\it weight} of $\lambda$. In particular, we view the empty partition, denoted as $\epsilon$, as the one and only partition of $n=0$. Denote by $\P$ and $\P(n)$ the set of all partitions and the set of all partitions of $n$, respectively. For example, we have
$$\P(4)=\{4,~3+1,~2+2,~2+1+1,~1+1+1+1\},$$
and $2+1+1$ is said to be a partition of $4$ with one part of $2$ and two parts of $1$. A standard way of representing partitions pictorially is to use the so-called {\it Ferrers graph}~\cite[p.~7]{andtp}. For a partition $\lambda=\lambda_1+\lambda_2+\cdots$, we draw a left-justified array of cells, so that a top-down counting reveals $\lambda_i$ cells in the $i$-th row; see Fig~\ref{fig:profile} for an example\footnote{This way of drawing Ferrers graphs is usually called the ``English notation''.}. Using Ferrers graph, the {\it conjugate partition} of $\lambda$, usually denoted as $\lambda'$, is defined to be the partition rendered from counting cells in the Ferrers graph of $\lambda$ by columns in stead of rows. So for instance, the conjugate of $2+1+1$ is $3+1$.

{\it Two-color partitions}, on the other hand, are those in which each part may come in two colors, say red and green. We indicate the color of any given part by assigning a subindex $r$ (for red) or $g$ (for green) to that part. For instance, there are ten two-color partitions of $3$, as listed below.
\begin{align*}
& 3_r, 3_g, 2_r+1_r, 2_r+1_g, 2_g+1_r, 2_g+1_g,\\
& 1_r+1_r+1_r, 1_r+1_r+1_g, 1_r+1_g+1_g, 1_g+1_g+1_g.
\end{align*}
Given a two-color partition, we shall say that two parts are distinct if they are of different colors or different numerical values or both. We shall say that two parts are numerically distinct if they have different numercial values. As noted by Lovejoy \cite[p.~395]{lov03}, if we require in addition that all green parts are distinct, we essentially recover {\it overpartitions} \cite{CJ}, a well-studied variant that usually enjoys parallel results to those known ones for ordinary partitions. 

Suppose $n\ge 0$ and $d\ge 1$ are integers. Following Andrews \cite{and21}, we define $\L_d(n)$ to be the set of two-color partitions of $n$ into numerically distinct parts such that the following three conditions are satisfied:
\begin{enumerate}
  \item each red part is at least $d$ larger than the next largest part;
  \item each green part is at least $d+1$ larger than the next largest part;
  \item neither $1_g$ nor $(d-1)_g$ is allowed as a part.
\end{enumerate}
We use $L_d(n)$ to denote the cardinality of $\L_d(n)$.

The three theorems on two-color partitions proved in \cite{and21} are these:

\begin{theorem}\label{thm:L1}
$L_1(n)$ equals the number of two-color partitions of $n$ in which parts with the same color are distinct and green parts are all even numbers.
\end{theorem}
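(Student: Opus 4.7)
The plan is to construct a weight-preserving bijection
\[
\Phi \colon \L_1(n) \longrightarrow \mathcal{M}(n),
\]
where $\mathcal{M}(n)$ denotes the target set in the statement of the theorem (parts of the same color distinct, all green parts even). \emph{First, I would reformulate the defining conditions of $\L_1(n)$.} Since the parts are required to be numerically distinct, the condition ``each red part is at least $1$ larger than the next largest part'' is automatic, and the non-trivial content of the definition reduces to the following: the set $T$ of all part values is a set of distinct positive integers summing to $n$, and each green part $g$ satisfies $g\ge 2$ and $g-1\notin T$. Equivalently, an element of $\L_1(n)$ is a pair $(T,G)$ in which $G$ is any subset of the ``run-starts'' of $T$ at values $\ge 2$, which gives the clean enumeration $L_1(n)=\sum_T 2^{f(T)}$, with $f(T)$ the number of such run-starts and $T$ ranging over all distinct partitions of $n$.

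\emph{Second, I would construct $\Phi$ by a split-and-merge procedure on the green parts.} Each green $g$ that is already even is retained as a green part of the image; each green $g$ that is odd is decomposed into an even green part and some adjustment on the red side. A first, tempting rule is ``odd $g\mapsto$ even $g-1$ plus one red $1$'', but this is \emph{not} a bijection: for instance, at $n=5$ both $5_g\in\L_1(5)$ (one odd green, no red) and $4_g+1_r\in\L_1(5)$ (one even green, one red) are then sent to the same image $4_g+1_r$. A correct $\Phi$ therefore requires, for each odd green $g$, a canonical choice of a distinct-red partition of the deficit $g-2m$ (for some even $2m\le g-1$), combined with a canonical rule for merging the new reds with the existing reds so that the result has distinct reds. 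A natural candidate is a Glaisher-style carry, iteratively combining pairs of equal reds $k+k\mapsto 2k$; \emph{the main obstacle} is to show that some such variant is invertible, which in turn requires identifying an invariant on $\mathcal{M}(n)$ that uniquely decides, for each even green $2k$ in the image, whether it originated from the even $\L_1$-green $2k$ or from the odd $\L_1$-green $2k+1$, and encodes the way the Glaisher cascade interacted with the pre-existing red parts of dyadic values $1,2,4,\ldots$

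\emph{Refinement.} In any such construction $\Phi$ maps each green part of $\L_1(n)$ to exactly one green part of $\mathcal{M}(n)$, so the number of green parts is preserved; this yields the refined identity $L_1(n;k)=M(n;k)$, where $k$ counts green parts on both sides. The number of red parts is in general \emph{not} preserved, because odd greens inject red $1$'s that can cascade through the Glaisher merges, so the paper's refinement records the original red count by an appropriate statistic on $\mathcal{M}(n)$ that is recovered from the inverse of $\Phi$.
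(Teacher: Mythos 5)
Your first step is sound: with $d=1$ the red gap condition is vacuous, and an element of $\L_1(n)$ is exactly a distinct partition $T$ of $n$ together with a subset $G$ of those parts $g\ge 2$ with $g-1\notin T$, giving $L_1(n)=\sum_T 2^{f(T)}$. Your observation that the naive rule ``odd green $g\mapsto$ even green $(g-1)$ plus a red $1$'' is not injective is also correct, and the $n=5$ example makes the point. But at the decisive moment the proposal stops: you write that a correct $\Phi$ ``requires a canonical choice'' and that ``the main obstacle is to show that some such variant is invertible,'' without exhibiting either the choice or the invariant that would make the inverse well defined. That obstacle \emph{is} the theorem. Combinatorially the statement is equivalent to Lebesgue's identity
\[
\sum_{m\ge 0}\frac{(-zq;q)_m}{(q;q)_m}\,q^{\binom{m+1}{2}}=(-q;q)_{\infty}(-zq^2;q^2)_{\infty},
\]
and none of the known bijective proofs (Alladi--Gordon, Bessenrodt, Little--Sellers) is a Glaisher-style carry on repeated red parts; it is far from clear that a $2$-adic merging of equal reds can be made invertible here, precisely because the target set already admits arbitrary distinct red parts (including powers of $2$), so the cascade destroys the information you would need to recover. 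As written, this is a proof plan with its key step unexecuted, not a proof.

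For comparison, the paper proves the stronger refined statement (Theorem~\ref{thm:L1-ref}) in two complete ways: analytically, by writing down the generating functions \eqref{id:gfL1} and \eqref{id:gfA} and equating them via the $q$-binomial theorem; and bijectively, by encoding each $\lambda\in\L_1(n)$ as a word in three letters recording the profile of its Ferrers graph (Proposition~\ref{prop:bij-phi}) and then extracting the distinct even green parts from the positions of the letters $z$ (Theorem~\ref{thm:bij-psi}) --- an avatar of the Little--Sellers tiling bijection, operating on the border path rather than on multiplicities of equal parts. Your guesses about the refinement are partly right: the number of green parts is indeed preserved, but the red statistic that survives is not the total red count; it is the number of red parts of the image exceeding $\ell$, with the auxiliary parameter $j$ counting the small red parts created by the map.
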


\begin{theorem}\label{thm:L2}
$L_2(n)$ equals the number of basis partitions of $n$.
\end{theorem}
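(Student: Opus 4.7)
The plan is to exhibit an explicit bijection $\phi : \L_2(n) \to \B(n)$, where $\B(n)$ denotes the set of \emph{basis partitions} of $n$---partitions $\lambda$ whose parts below the Durfee square form a strict partition, i.e.\ if the Durfee square has side $s$, then $\lambda_{s+1} > \lambda_{s+2} > \cdots$. The strategy is to parametrize both sides by a common triple $(k, \alpha, B)$, where $k \ge 0$, $\alpha$ is a partition with at most $k$ parts, and $B \subseteq \{1, 2, \ldots, k\}$, and then declare $\phi$ to be the map matching these two parametrizations.

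On the basis-partition side, $\lambda \in \B(n)$ with Durfee square of side $s$ is uniquely encoded by $(s, \alpha, B)$, where $\alpha = (\lambda_1 - s, \ldots, \lambda_s - s)$ is the right extension of the Durfee square and $B = \{\lambda_{s+1}, \lambda_{s+2}, \ldots\}$ is the set (distinct by hypothesis) of parts strictly below the Durfee square; manifestly $|\lambda| = s^2 + |\alpha| + \sum_{b \in B} b$. On the $\L_2$ side, I would take $\mu \in \L_2(n)$ with parts $p_1 > p_2 > \cdots > p_k$, use the integer $k$ as the common $k$, let $B = \{i : p_i \text{ is green}\}$, and introduce the ``excess gaps''
\[
a_j = p_j - p_{j+1} - 2 - [j \in B] \quad (1 \le j < k), \qquad a_k = p_k - 1 - [k \in B],
\]
which are nonnegative precisely because of conditions (1)--(3) in the definition of $\L_d$ for $d=2$ (the prohibition on $1_g$ corresponding to $a_k \ge 0$ when $k \in B$). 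Setting $\alpha_i = \sum_{j \ge i} a_j$ produces a partition with at most $k$ parts, and a short telescoping computation yields $|\mu| = k^2 + |\alpha| + \sum_{b \in B} b$, matching the basis-side identity.

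The bijection $\phi$ is then the composition of the encoding $\mu \mapsto (k, \alpha, B)$ with the decoding $(s, \alpha, B) \mapsto \lambda$. Each step is visibly invertible, so $\phi$ is a bijection, and it preserves weights by the identity above. A refinement falls out for free: the number of green parts of $\mu$ equals $|B|$, which in turn equals the number of parts of $\phi(\mu)$ strictly below its Durfee square, so $\phi$ automatically refines the theorem by jointly tracking the green-part count and the Durfee-related statistics.

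The main technical point will be checking that this translation between the two parametrizations is watertight---in particular, that the four $\L_2$ conditions (numerically distinct parts, red-gap $\ge 2$, green-gap $\ge 3$, no $1_g$) correspond precisely and without redundancy to the inequalities $a_j \ge 0$ together with $B \subseteq \{1, \ldots, k\}$. Once that correspondence is set up cleanly, both the well-definedness of $\phi$ and its weight-preservation follow by routine bookkeeping, with no further combinatorial surgery required.
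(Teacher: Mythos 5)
Your encoding of $\L_2(n)$ is sound: the excess gaps $a_j$ are nonnegative precisely because of the $\L_2$ conditions (with the ban on $1_g$ giving $a_k\ge 0$ for a green smallest part), and the telescoping identity $|\mu|=k^2+|\alpha|+\sum_{b\in B}b$ checks out. The genuine gap is on the other side: the ``definition'' of basis partition you map onto is not the definition. A basis partition is \emph{not} a partition whose parts below the Durfee square are distinct. Writing $\lambda=(d,\pi,\sigma)$ with $\pi$ the subpartition below the Durfee square of $\lambda$ and $\sigma$ that of the conjugate $\lambda'$, the Nolan--Savage--Wilf characterization (Definition~\ref{def:basis}) requires that $\pi$ and $\sigma$ have \emph{no part in common}; $\pi$ itself may repeat parts. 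Indeed the paper's own example lists $(3,\,3+1+1+1,\,\epsilon)$ as a basis partition of $15$, and its $\pi$ is not strict. Conversely, $4+1$ lies in your class (below-Durfee part $=1$, strict) but is not a basis partition, since there $\pi=1$ and $\sigma=1+1+1$ share the part $1$; and $1+1+1+1+1$ is a basis partition your class misses. So, as written, your bijection proves a different statement: $L_2(n)$ equals the number of partitions of $n$ whose subpartition below the Durfee square is strict.

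That different statement is in fact equivalent to the theorem, but the equivalence needs proof: for each part size $i\le d$, multiplicity pairs $(a_i,b_i)$ with $\min(a_i,b_i)=0$ and those with $a_i\le 1$ are both generated by $(1+q^i)/(1-q^i)$, so both classes have generating function $\sum_{d\ge 0}(-q)_d\,q^{d^2}/(q)_d$; alternatively one can give a part-size-by-part-size bijection. You must supply that bridge (or redirect your map) before the argument establishes the stated theorem. For comparison, the paper's first step is close to yours --- the $2$-indented Ferrers graph peels off the staircase $(2m-1)+(2m-3)+\cdots+1$, much like your removal of forced gaps --- but it then \emph{conjugates} the remainder $\tilde{\lambda}$ and assigns to $\pi$ every part of $\tilde{\lambda}'$ whose length equals that of a column ending in a green cell, the rest going to $\sigma$. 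Since all columns of a given height travel together, $\pi$ and $\sigma$ automatically share no parts, so the image is a genuine basis partition, and $\pi$ acquires exactly $\ell$ distinct part sizes, which is what drives the refinement in Theorem~\ref{thm:L2-ref}.
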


\begin{theorem}\label{thm:L3}
$L_3(n)$ equals the number of partitions of $n$ into distinct parts.
\end{theorem}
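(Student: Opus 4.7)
The plan is to prove the theorem by constructing a weight-preserving bijection $\Phi\colon\L_3(n)\to\D(n)$, where $\D(n)$ denotes the set of partitions of $n$ into distinct parts. The guiding idea is that each red part of $\lambda\in\L_3(n)$ will appear unchanged in $\mu=\Phi(\lambda)$, while each green part $g$ of $\lambda$ is split into two distinct positive integers summing to $g$. Since $\L_3$ forbids $1_g$ and $2_g$, every green part satisfies $g\ge 3$ and thus admits such a split; moreover the gap conditions for $\L_3$ (red gap $\ge 3$, green gap $\ge 4$) differ by exactly one, which is precisely the extra unit of room consumed when a green part is replaced by two parts.

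Concretely, I process the parts of $\lambda=\lambda_1>\lambda_2>\cdots>\lambda_k$ in a fixed order (say smallest to largest), building up $\mu$ as a growing set of distinct values. A red part is appended as-is, while each green $g$ is split into a pair $(a,b)$ with $a>b\ge 1$ and $a+b=g$ according to a canonical rule designed to avoid collisions with parts already in $\mu$. For the inverse $\Psi\colon\D(n)\to\L_3(n)$, given $\mu\in\D(n)$ I partition its parts into blocks of sizes $1$ and $2$---size-$1$ blocks becoming red parts of $\lambda$ and size-$2$ blocks becoming green parts with value equal to the block sum---so that the block sums (with colors), sorted in decreasing order, form an element of $\L_3$; a matching canonical rule singles out one such block decomposition. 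Tracking block sizes then yields the refinement: $\lambda\in\L_3(n)$ with $r$ red and $g$ green parts corresponds bijectively to $\mu\in\D(n)$ admitting a canonical decomposition into $r$ singletons and $g$ pairs.

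The main obstacle is the precise design of the canonical splitting/grouping rule ensuring both that $\Phi$ lands in $\D(n)$ and that $\Phi,\Psi$ are mutually inverse. A naive local rule (for instance, taking each split as balanced as possible, with small adjustments to avoid collisions) can already fail to be injective, because two different $\lambda$'s in $\L_3(n)$ may collide to the same $\mu$; the correct rule requires a measure of global coordination among the green splits. Well-definedness and bijectivity are then verified by an induction on the number of parts, using the $\L_3$ gap inequalities at each step; the extra unit of gap for green parts is precisely what is absorbed by the extra part produced from each split. Once these technical points are in place, the theorem, its refined version, and the generating-function identity $\sum_{n\ge 0}L_3(n)q^n=(-q;q)_\infty$ all follow.
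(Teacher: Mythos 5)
Your proposal is a plan rather than a proof: the entire content of a bijective argument here lies in the ``canonical splitting/grouping rule,'' and you explicitly leave it unspecified while acknowledging that the naive local choices fail (a balanced split can collide with existing parts, and different $\lambda$'s can map to the same $\mu$, so injectivity is genuinely at risk). Saying that ``the correct rule requires a measure of global coordination among the green splits'' names the difficulty without resolving it; as written there is no well-defined map $\Phi$, no verified inverse $\Psi$, and no argument that the two block decompositions you would need (one for $\Phi$, one for $\Psi$) are compatible. The refinement you state is also circular in its current form: ``$\mu$ admitting a canonical decomposition into $r$ singletons and $\ell$ pairs'' is only meaningful once the rule exists, whereas a usable refinement must be stated intrinsically in terms of $\mu$ (the paper's version uses the number of parts, $k+2\ell$, together with the Durfee square side $k+\ell$).

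For comparison, the paper's bijection avoids this coordination problem entirely by working globally from the start: given $\lambda\in\L_3(n,k,\ell)$ with $m=k+\ell$ parts, it removes the staircase $(2m-1)+(2m-3)+\cdots+1$ (the $2$-indented Ferrers graph), obtaining a strict partition $\tilde\lambda$, then conjugates and routes the columns of $\tilde\lambda'$ that end in a green cell into a subpartition $\pi$ below the Durfee square, the rest forming $\sigma$; the image is the triple $(m,\pi,\sigma)$. The gap conditions of $\L_3$ translate exactly into $\tilde\lambda$ being strict with the extra unit of gap at green parts forcing $\pi$ to be strict, which is why no ad hoc collision-avoidance is needed. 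If you want to salvage your ``split each green part in two'' picture, you would need to make the splitting rule depend on the whole partition (essentially reproducing the staircase/conjugation bookkeeping), and then prove injectivity and surjectivity; until that rule is written down and verified, the proof is incomplete.
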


\begin{remark}
To be precise, the original statements of Theorems~\ref{thm:L1} and \ref{thm:L3} in Andrews's paper \cite{and21} are different from what we give here, but it suffices to use Euler's celebrated ``Odd-distinct Theorem'' to bridge the gap. The definition of basis partition is a bit involved so we decide to postpone it to section~\ref{sec:L2} (see Definition~\ref{def:basis}).
\end{remark}

Andrews \cite{and21} proved the above theorems in a uniform way by manipulating generating functions and he requested for bijective proofs. The main purpose of this paper is to prove all three theorems bijectively. Indeed, our bijective approach makes it effortless to keep track of various partition statistics and consequently leads us to the following three refinements. We begin with two definitions.

\begin{Def}
Let $\L_d(n,k,\ell)$ (resp.~$L_d(n,k,\ell)$) denote the set (resp.~the number) of two-color partitions in $\L_d(n)$ with $k$ red parts and $\ell$ green parts. 
\end{Def}
Clearly, $L_d(n)=\sum_{k,\ell\ge 0}L_d(n,k,\ell)$, hence the following three results are indeed refinements of Theorems \ref{thm:L1}, \ref{thm:L2}, and \ref{thm:L3}, respectively.

\begin{Def}\label{def:A}
Let $\A(n,k,\ell)$ (resp.~$A(n,k,\ell)$) be the set (resp.~the number) of two-color partitions of $n$, each of which is consisted of $k+j$ distinct red parts and $\ell$ distinct even green parts for a certain non-negative integer $j$, wherein exactly $k$ red parts are larger than $\ell$.
\end{Def}

\begin{theorem}\label{thm:L1-ref}
For non-negative integers $n,k,\ell$, we have
$L_1(n,k,\ell)=A(n,k,\ell)$.
\end{theorem}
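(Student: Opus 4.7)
The set $\A(n,k,\ell)$ has a natural coordinatization as triples $(R,\gamma,S)$, where $R=(R_1>\cdots>R_k)$ lists the $k$ red parts exceeding $\ell$, $\gamma=(\gamma_1>\cdots>\gamma_\ell)$ lists the $\ell$ distinct positive even green parts, and $S\subseteq\{1,\dots,\ell\}$ records the values of the small red parts (those $\leq\ell$). The plan is to build $\Phi:\L_1(n,k,\ell)\to\A(n,k,\ell)$ as a composition $\Phi=\psi\circ\varphi$ through an intermediate family $\B(n,k,\ell)$ of pairs $(R,G)$, where $R$ is as above and $G=(G_1>\cdots>G_\ell)$ is super-distinct (i.e., $G_\ell\geq 2$ and $G_j-G_{j+1}\geq 2$), subject to $|R|+|G|=n$. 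The intermediate is useful because in $\L_1$ the two colors interact via distinctness and gap constraints, whereas in $\B$ and $\A$ the red and green data are independent.

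The first map $\varphi:\L_1(n,k,\ell)\to\B(n,k,\ell)$ sends $\lambda$, with red parts $r_1>\cdots>r_k$ and green parts $g_1>\cdots>g_\ell$, to the pair defined by
\[
R_i:=r_i+\bigl|\{j:g_j>r_i\}\bigr|,\qquad G_j:=g_j-\bigl|\{i:r_i<g_j\}\bigr|.
\]
Weight is preserved because both corrections count the set of pairs $(i,j)$ with $g_j>r_i$. For the range: among parts of $\lambda$ in $\{1,\dots,r_i\}$ there are $k-i+1$ reds and $|\{g_j\leq r_i\}|$ greens, all distinct positive integers, forcing $r_i>|\{g_j\leq r_i\}|$ and hence $R_i>\ell$; a similar counting argument on the open intervals $(r_{i+1},r_i)$ gives strict decrease of the $R_i$. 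For the greens, the axiom that each $g_j$ sits $\geq 2$ above the next part combined with numerical distinctness implies that every red $<g_j$ is in fact $\leq g_j-2$, which yields $G_\ell\geq 2$ and $G_j-G_{j+1}\geq 2$.

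The second map $\psi:\B(n,k,\ell)\to\A(n,k,\ell)$ keeps $R$ unchanged and converts $G$ into $(\gamma,S)$ by a parity decomposition of successive differences. Setting $G_{\ell+1}:=0$ and $d_j:=G_j-G_{j+1}\geq 2$, I write uniquely $d_j=2+2e_j+\eta_j$ with $e_j\geq 0$ and $\eta_j\in\{0,1\}$, and put
\[
\gamma_i:=\sum_{j\geq i}(2+2e_j),\qquad S:=\{j:\eta_j=1\}.
\]
Each $\gamma_i$ is positive and even, $\gamma_i-\gamma_{i+1}=2+2e_i\geq 2$, and telescoping $\sum_iG_i=\sum_j jd_j$ yields $|G|=|\gamma|+\sum_{j\in S}j$, so $\psi$ is weight-preserving. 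Its inverse reads $e_j$ off from consecutive differences of $\gamma$ and $\eta_j$ from the indicator of $S$.

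The step I expect to be the main obstacle is producing the inverse of $\varphi$. I plan to do this greedily: given $(R,G)\in\B(n,k,\ell)$, build the sorted sequence $p_1>p_2>\cdots>p_{k+\ell}$ of $\lambda$ one entry at a time, tracking the numbers $a$ and $b$ of reds and greens already placed. At each step compare the two candidate values $R_{a+1}-b$ and $G_{b+1}+(k-a)$: declare the next part red of value $R_{a+1}-b$ if $R_{a+1}-b\geq G_{b+1}+(k-a)$, and green of value $G_{b+1}+(k-a)$ otherwise. A short local argument shows that in the equality case one must break red (placing a green would force the next candidate part to be red with value only one less, violating the gap-$2$ rule at the just-placed green), and that the sequence produced is indeed a valid element of $\L_1(n,k,\ell)$ sent back to $(R,G)$ by $\varphi$. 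The composition $\Phi=\psi\circ\varphi$ then yields the desired bijection and proves Theorem~\ref{thm:L1-ref}.
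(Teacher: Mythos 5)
Your proof is correct, and it takes a genuinely different route from the paper. The paper gives two proofs: an analytic one ($q$-binomial theorem applied to the right side of \eqref{id:gfA}) and a combinatorial one that encodes each $\lambda\in\L_1(n,k,\ell)$ as its profile word in the letters $x,y,z$ (a reformulation of the Little--Sellers tiling construction) and then maps words to $\A(n,k,\ell)$ by extracting the $z$'s, with an odd/even classification of $z$'s tracking the auxiliary parameter $j$. You instead factor the bijection through the intermediate set of pairs $(R,G)$ with $R$ strict on parts $>\ell$ and $G$ super-distinct with smallest part $\ge 2$: the first step separates the two colors by the classical insertion-correction $R_i=r_i+|\{j:g_j>r_i\}|$, $G_j=g_j-|\{i:r_i<g_j\}|$, and the second step splits each gap $d_j=G_j-G_{j+1}\ge 2$ by parity into an even green partition plus a subset of $\{1,\dots,\ell\}$ realized as small red parts. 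Your verifications of well-definedness (in particular that the gap-$2$ condition on green parts of $\lambda$ forces $G_j-G_{j+1}\ge 2$ and $G_\ell\ge 2$, and that in the greedy inverse a tie must be broken red because equality of the two candidates with a green choice would put a red part exactly one below the green just placed) are exactly the right points and they check out; the remaining assertions (strict decrease of the greedy output, since both candidates drop by at least one per step, and the consistency $\varphi(\lambda)=(R,G)$) are routine. What each approach buys: your two-step factorization mirrors the generating function \eqref{id:gfA} term by term --- the intermediate $\B(n,k,\ell)$ is literally the combinatorial object generated by $x^ky^\ell q^{\binom{k+\ell+1}{2}+\binom{\ell+1}{2}}/\bigl((q)_k(q)_\ell\bigr)$ --- and isolates the one delicate point (tie-breaking) in a single local lemma; the paper's word encoding handles both colors in one pass along the profile, makes the refinement by $j$ (the number of small red parts, i.e.\ your $|S|$, appearing as the number of odd $z$'s) transparent, and connects directly to the Alladi--Gordon and Little--Sellers treatments of the Lebesgue identity. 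You may wish to note that your construction also refines by $j=|S|$, recovering Theorem~\ref{thm:bij-psi} in different clothing.
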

\begin{example}
Taking $(n,k,\ell)=(7,1,1)$, we see that $$\L_1(7,1,1)=\{6_g+1_r,~5_g+2_r,~5_r+2_g,~4_r+3_g\}.$$ On the other hand, there are also four two-color partitions in $\A(7,1,1)$, namely,
$$\A(7,1,1)=\{4_g+2_r+1_r,~4_r+2_g+1_r,~4_g+3_r,~5_r+2_g\}.$$
\end{example}

\begin{theorem}\label{thm:L2-ref}
The number of basis partitions $\lambda=(k+\ell,\pi,\sigma)$ of $n$ such that $\pi$ has exactly $\ell$ distinct parts, is given by $L_2(n,k,\ell)$.
\end{theorem}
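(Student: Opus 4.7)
My approach is to refine the bijection between $\L_2(n)$ and basis partitions of $n$ that I plan to construct in Section~\ref{sec:L2} in order to prove Theorem~\ref{thm:L2}, then check that it carries the pair (number of red parts, number of green parts) of a two-color partition to a pair of natural structural statistics on the basis triple $(m,\pi,\sigma)$, namely to (Durfee side minus number of distinct parts of $\pi$, number of distinct parts of $\pi$). In outline, the plan is: first identify that the total number of parts equals the Durfee side, and then show that the coloring records exactly the descent pattern of $\pi$.

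First, I would fix the standard decomposition $\lambda = (m, \pi, \sigma)$ of a basis partition $\lambda$ of $n$, where $m$ is the side of the Durfee square of $\lambda$ (with $\pi$ sitting below it and $\sigma$ to its right). The intended bijection sends each part of $\mu \in \L_2(n)$ to exactly one row of the Ferrers graph of $\lambda$ whose length meets or exceeds $m$, so the total number of parts of $\mu$, which is $k+\ell$, must equal $m$. This identifies the first entry of the triple and reduces the problem to tracking the split between red and green parts on one side, and distinct versus repeated parts of $\pi$ on the other.

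Second, and this is the heart of the refinement, I would argue that the color of a part in $\mu$ records whether the corresponding row contributes a strict or non-strict descent in $\pi$. Concretely, a green part of $\mu$ must differ from its smaller neighbour by at least $d+1 = 3$, and under the bijection this extra unit of gap will be absorbed into a strictly smaller row-length in $\pi$, producing a new distinct part; a red part (gap $\ge 2$) should produce a repetition in $\pi$. The forbidden value $1_g$ (equivalently $(d-1)_g$) is exactly the configuration that would translate into an illegal $0$-valued row of $\pi$, so excluding it is what keeps the pairing valid. Consequently the number of distinct parts of $\pi$ equals the number of green parts of $\mu$, namely $\ell$.

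The main obstacle I anticipate is this second step: the bookkeeping between the gap requirements of $\L_2$ and the fine descent structure of $\pi$ needs to be done carefully, and one must also verify that $\sigma$ is orthogonal to this statistic so that the split $(k,\ell)$ is genuinely determined by $\pi$ alone. Once that is in place, restricting the bijection of Theorem~\ref{thm:L2} to basis partitions whose $\pi$ component has exactly $\ell$ distinct parts and whose Durfee square has side $k+\ell$ yields a weight-preserving bijection onto $\L_2(n,k,\ell)$, establishing Theorem~\ref{thm:L2-ref}.
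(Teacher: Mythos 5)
There is a genuine gap here, on two levels. First, your argument defers the essential construction: you propose to ``refine the bijection I plan to construct to prove Theorem~\ref{thm:L2},'' but no such bijection is exhibited, and in fact the natural route (and the one the paper takes) is the reverse — one builds the bijection directly at the refined level, and Theorem~\ref{thm:L2} falls out by summing over $k$ and $\ell$. Without the explicit map, none of the structural claims in your second step can be checked. The missing construction is this: write $\lambda=\lambda_1+\cdots+\lambda_m\in\L_2(n,k,\ell)$ with $m=k+\ell$, strip off the staircase $(2m-1)+(2m-3)+\cdots+1$ to get $\tilde\lambda_i=\lambda_i-2m+2i-1$ (the $2$-indented Ferrers graph), conjugate, and then split the parts of $\tilde\lambda'$ by \emph{value}: the value $i$ occurs in $\tilde\lambda'$ with multiplicity $\lambda_i-\lambda_{i+1}-2$, and all copies of $i$ go to $\pi$ when $\lambda_i$ is green and to $\sigma$ when $\lambda_i$ is red. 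Disjointness of $\pi$ and $\sigma$ (the basis-partition condition) and the count of $\ell$ distinct parts of $\pi$ are then immediate.

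Second, the mechanism you do sketch is wrong where it is most specific: you assert that ``a red part (gap $\ge 2$) should produce a repetition in $\pi$.'' In any correct bijection this cannot hold, because the number of repeated parts of $\pi$ is not $k$. Take the paper's example $(3,\,3+1+1+1,\,\epsilon)\in\B(15,1,2)$: here $\pi=3+1+1+1$ has three extra (repeated) copies beyond its two distinct values, yet $k=1$. Under the map above this triple corresponds to $9_g+4_r+2_g$; the multiplicity $3$ of the part $1$ in $\pi$ comes from the \emph{green} gap $9-4=5$ exceeding the minimum $3$ by two, while the red part $4_r$ with gap exactly $2$ contributes nothing at all ($\sigma=\epsilon$). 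So red parts govern $\sigma$, not repetitions in $\pi$, and repetitions in $\pi$ are governed by the excess of green gaps over $3$. Your identification of the Durfee side with the total number of parts, and of the number of distinct parts of $\pi$ with the number of green parts, are both correct conclusions, as is your observation that forbidding $1_g$ rules out the degenerate zero-length column; but as written the proof of the central step is absent and the proposed bookkeeping would not close.
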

\begin{example}
For $(n,k,\ell)=(15,1,2)$, there are six basis partitions meeting the requirements, namely (in terms of the triple expression $(d,\pi,\sigma)$; see Sect.~\ref{sec:L2}),
$$(3,3+1+1+1,\epsilon), (3,2+2+1+1,\epsilon), (3,2+1+1+1+1,\epsilon), (3,3+2,1), (3,3+1,2), (3,2+1,3).$$
On the other hand, one checks that
$$\L_2(15,1,2)=\{10_g+4_g+1_r,9_g+5_g+1_r,8_g+5_g+2_r,9_g+4_r+2_g,8_g+5_r+2_g,8_r+5_g+2_g\},$$
which also contains six partitions.
\end{example}

\begin{theorem}\label{thm:L3-ref}
$L_3(n,k,\ell)$ equals the number of partitions of $n$ into $k+2\ell$ distinct parts such that the Durfee square is of side $k+\ell$.
\end{theorem}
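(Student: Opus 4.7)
The plan is to construct an explicit weight-preserving bijection $\Phi$ from $\L_3(n,k,\ell)$ to $\D(n,k,\ell)$, where $\D(n,k,\ell)$ denotes the set of partitions of $n$ with exactly $M := k + 2\ell$ distinct parts whose Durfee square has side $s := k + \ell$. I will parametrize both sides as products of simpler combinatorial pieces, then match the pieces pairwise via two classical bijections.

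First, I will encode each $\lambda \in \L_3(n,k,\ell)$ as a pair $(c, \nu)$. Here $c = (c_1, \ldots, c_s) \in \{r, g\}^s$ is the color sequence read off by listing the parts of $\lambda$ in decreasing order (the $\ell$ greens sit at positions $p_1 < \cdots < p_\ell$), and $\nu := \lambda - \lambda^{\min}_c$, with $\lambda^{\min}_c$ the unique minimum $\L_3$-partition having color sequence $c$. A short induction on the $\L_3$ gap recursion $\lambda^{\min}_{c, i} = \lambda^{\min}_{c, i+1} + 3 + \mathbf{1}\{c_i = g\}$ (anchored by $\lambda^{\min}_{c, s} = 1 + 2 \cdot \mathbf{1}\{c_s = g\}$) yields the closed form
\[
|\lambda^{\min}_c| \;=\; s + s \cdot \mathbf{1}\{c_s = g\} + 3\binom{s}{2} + \sum_{j=1}^{\ell} p_j,
\]
and $\nu$ is a free partition with at most $s$ parts. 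On the target side, I will take $\mu_i := \eta_i - (M - i + 1)$, turning $\eta \in \D(n,k,\ell)$ into a partition $\mu$ with $\le M$ parts subject to $\mu_s \ge k - 1$ and $\mu_{s+1} \le k$, and then split $\mu$ into $\alpha := (\mu_1 - (k-1), \ldots, \mu_s - (k-1))$, a partition with $\le s$ parts, and $\beta := (\mu_{s+1}, \ldots, \mu_{s+\ell})$, a partition inside the $\ell \times k$ rectangle, subject to the compatibility $\alpha_s = 0 \Rightarrow \beta_1 \le k - 1$.

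The bijection $\Phi$ will then decouple into two independent matchings. The first uses the classical word-to-rectangle encoding $\beta_{\ell - j + 1} := p_j - j$ to convert $c$ into $\beta$, giving $|\beta| = \sum_j (p_j - j)$ and the crucial equivalence $\beta_1 = k \Leftrightarrow p_\ell = s \Leftrightarrow c_s = g$. The second is the shift
\[
\alpha := \nu + (1^s) \ \text{if} \ c_s = g, \qquad \alpha := \nu \ \text{if} \ c_s = r,
\]
which sends $\nu$ to $\alpha$ with $|\alpha| = |\nu| + s \cdot \mathbf{1}\{c_s = g\}$. Weight preservation $|\lambda| = |\eta|$ then reduces to the identity $|\lambda^{\min}_c| + |\nu| = \binom{M+1}{2} + s(k-1) + |\alpha| + |\beta|$, a direct consequence of the displayed formula combined with $\sum_j (p_j - j) = |\beta|$.

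The hard part will be verifying the interlock between the two matchings at their common boundary. The encoding produces $\beta_1 = k$ exactly when $c_s = g$, which is precisely when the shift raises $\alpha_s$ from $\nu_s$ to $\nu_s + 1 \ge 1$; this alignment forces the compatibility $\alpha_s = 0 \Rightarrow \beta_1 \le k - 1$ to hold automatically, ensuring $\Phi$ lands inside the target parametrization. Once this dovetail is checked, the Durfee-square conditions on $\mu$ and the distinctness of parts of $\eta$ are immediate, and inverting $\Phi$ is routine: read $(\alpha, \beta)$ from $\mu$, recover $c$ from $\beta$, and then $\nu$ from $\alpha$.
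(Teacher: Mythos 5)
Your construction is correct, and it takes a genuinely different route from the paper. The paper's bijection $\theta$ subtracts the uniform odd staircase $(2m-1)+(2m-3)+\cdots+1$ (the $2$-indented Ferrers graph), \emph{conjugates} the remainder $\tilde{\lambda}$, splits the columns of $\tilde{\lambda}'$ according to whether they end at a green cell, and reassembles the two pieces as a Durfee triple $(m,\pi,\sigma)$. You never conjugate: you subtract the color-dependent minimal partition $\lambda^{\min}_c$, encode the green positions separately by the subset-to-box bijection, and rebuild the target by adding the staircase $(M,M-1,\ldots,1)$, so that both $\L_3(n,k,\ell)$ and $\D(n,k,\ell)$ get parametrized by the same product (partition with at most $s$ parts) $\times$ (partition in an $\ell\times k$ box) with one boundary condition; this is essentially a bijective rendering of the $q$-series proof via Sylvester's identity \eqref{id:Sylvester}. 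The details you leave implicit all check out: your closed form for $|\lambda^{\min}_c|$ agrees with the direct expansion $s+2s\cdot\mathbf{1}\{c_s=g\}+3\binom{s}{2}+\sum_{j<s,\,c_j=g}j$ because $\sum_j p_j$ absorbs one copy of $s\cdot\mathbf{1}\{c_s=g\}$; the weight identity reduces to $s+3\binom{s}{2}=\binom{M+1}{2}+s(k-1)-\binom{\ell+1}{2}$, both sides being $(3s^2-s)/2$; and the dovetail $\beta_1=k\Leftrightarrow c_s=g\Leftrightarrow$ (the shift is applied) is exactly the contrapositive of the compatibility condition, which is what makes the inverse well defined (including when $k=0$, where it forces $\alpha_s\ge 1$ and all parts are green). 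On test cases such as $13_g+4_r+1_r\mapsto 10+4+3+1$ and $9_r+6_r+3_g\mapsto 6+5+4+3$ your map agrees with the paper's $\theta$, so the two constructions may well define the same function; as proofs, however, they are distinct, yours trading the paper's pictorial conjugation step for explicit weight bookkeeping, which has the side benefit of making the refined generating function for $L_3(n,k,\ell)$ transparent.
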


The definition of the Durfee square of a partition will be given in section~\ref{sec:L2}.
\begin{example}
For $(n,k,\ell)=(18,2,1)$, the following are the ten strict partitions meeting the requirements,
\begin{align*}
& 10+4+3+1,~9+5+3+1,~8+6+3+1,~8+5+4+1,~7+6+4+1, \\
& 9+4+3+2,~8+5+3+2,~7+6+3+2,~7+5+4+2,~6+5+4+3.
\end{align*}
On the other hand, one checks that
\begin{align*}
\L_3(18,2,1) &= \{13_g+4_r+1_r,~12_g+5_r+1_r,~12_r+5_g+1_r,~11_g+6_r+1_r,~11_r+6_g+1_r,\\
&\qquad 10_r+7_g+1_r,~11_g+5_r+2_r,~10_g+6_r+2_r,~10_r+6_g+2_r,~9_r+6_r+3_g\},
\end{align*}
which contains ten partitions as well.
\end{example}

We will prove one refinement in each of the ensuing sections, and we conclude the paper with some outlook for future research.

\section{Two proofs of Theorem~\ref{thm:L1-ref}}\label{sec:L1}

We present in this section two proofs of Theorem~\ref{thm:L1-ref}. The first proof is via generating functions and requires the following {\it $q$-binomial theorem} \cite[p.~17,~Thm.~2.1]{andtp}. For $|q|<1$, $|t|<1$, we have
\begin{align}\label{id:q-bin}
\sum_{n=0}^{\infty}\frac{(a)_nt^n}{(q)_n}=\frac{(at)_{\infty}}{(t)_{\infty}}.
\end{align}
Here and in the sequel, we use the standard $q$-series notations
\begin{align*}
&(a)_n =(a;q)_n =(1-a)(1-aq)\cdots(1-aq^{n-1}),\\
&(a)_{\infty} =(a;q)_{\infty} =\lim_{n\to \infty}(a;q)_{n},\text{ and } (a)_0 =1.
\end{align*}

We claim the following two generating functions for $L_1(n,k,\ell)$ and $A(n,k,\ell)$, respectively.
\begin{align}
\label{id:gfL1}
\sum_{n,k,\ell\ge 0}L_1(n,k,\ell)x^ky^{\ell}q^n &= \sum_{m\ge 0}\frac{(-yq/x)_m x^m q^{\binom{m+1}{2}}}{(q)_m},\\
\label{id:gfA}
\sum_{n,k,\ell\ge 0}A(n,k,\ell)x^ky^{\ell}q^n &= \sum_{k,\ell\ge 0}\frac{x^k y^{\ell}q^{\binom{k+\ell+1}{2}+\binom{\ell+1}{2}}}{(q)_k(q)_{\ell}}.
\end{align}

Indeed, given a partition $\lambda\in\L_1(n,k,\ell)$, suppose it has $m$ ($=k+\ell$) parts in total (regardless of the colors). These $m$ parts must all be distinct so at least they should contribute $1+2+\cdots+m=m(m+1)/2$ to the weight of $\lambda$. But the difference $\lambda_i-\lambda_{i+1}$ might as well be larger than $1$, and this ``widening of gaps'', if any, is accounted for by the factor
$$\frac{1}{(q)_m}=\frac{1}{1-q}\frac{1}{1-q^2}\cdots\frac{1}{1-q^m},$$
where $1/(1-q)$ dictates the widening between $\lambda_1$ and $\lambda_2$, $1/(1-q^2)$ determines the extra gap between $\lambda_2$ and $\lambda_3$, and so on and so forth. Finally, to assign the color for part, say $\lambda_i$, we make a decision between red (contributing $x$), and green (contributing $yq^i$, since each green part is at least $2$ larger than the next part). Collectively, this yields the factor
$$(x+yq)(x+yq^2)\cdots(x+yq^m),$$
and we arrive at the right hand side of \eqref{id:gfL1} after pulling out $x$ from each factor $(x+yq^i)$, $1\le i\le m$.

Next, to verify \eqref{id:gfA}, we see that for a given partition $\mu\in\A(n,k,\ell)$, its $k$ red parts larger than $\ell$ are generated by
$$\frac{x^kq^{(\ell+1)+(\ell+2)+\cdots+(\ell+k)}}{(q)_k}=\frac{x^kq^{k\ell+\binom{k+1}{2}}}{(q)_k}.$$
The remaining red parts, if any, must be no greater than $\ell$ and are generated by $(-q)_{\ell}$, while the $\ell$ green even parts of $\mu$ are generated by
$$\frac{y^{\ell}q^{2+4+\cdots+2\ell}}{(q^2;q^2)_{\ell}}=\frac{y^{\ell}q^{\ell(\ell+1)}}{(q)_{\ell}(-q)_{\ell}}.$$
Cancelling out $(-q)_{\ell}$, we arrive at the right hand side of \eqref{id:gfA}.

\begin{proof}[Analytic proof of Theorem~\ref{thm:L1-ref}]
It suffices to show that the right hand sides of \eqref{id:gfL1} and \eqref{id:gfA} are the same. Starting from the right hand side of \eqref{id:gfA} and substituting $m$ for $k+\ell$, we deduce that
\begin{align*}
\sum_{k,\ell\ge 0}\frac{x^k y^{\ell}q^{\binom{k+\ell+1}{2}+\binom{\ell+1}{2}}}{(q)_k(q)_{\ell}} &= \sum_{m\ge 0}\frac{x^mq^{\binom{m+1}{2}}}{(q)_m}\sum_{\ell=0}^{m}\frac{(q^{m-\ell+1})_{\ell}}{(q)_{\ell}}\left(\frac{y}{x}\right)^{\ell}q^{\binom{\ell+1}{2}}\\
&= \sum_{m\ge 0}\frac{x^mq^{\binom{m+1}{2}}}{(q)_m}\sum_{\ell=0}^m \frac{(q^{-m})_{\ell}}{(q)_{\ell}}\left(-\frac{yq^{m+1}}{x}\right)^{\ell}\\
&= \sum_{m\ge 0}\frac{x^mq^{\binom{m+1}{2}}}{(q)_m}\frac{(-yq/x)_{\infty}}{(-yq^{m+1}/x)_{\infty}}\quad \text{(by \eqref{id:q-bin} with $a=q^{-m}$, $t=-\frac{yq^{m+1}}{x}$)}\\
&= \sum_{m\ge 0}\frac{x^mq^{\binom{m+1}{2}}}{(q)_m}(-yq/x)_{m},
\end{align*}
which is the right hand side of \eqref{id:gfL1}, as desired.
\end{proof}

It is worth noting that there is an auxiliary parameter $j$\footnote{This parameter $j$ appeared implicitly in the bijection of Alladi and Gordon; see~\cite[p.~26, Sect.~4.4]{pak06}.} in the definition of $\A(n,k,\ell)$ (see Definition~\ref{def:A}), so one may ask what is the corresponding parameter for those two-color partitions in $\L_1(n,k,\ell)$. Our second proof of Theorem~\ref{thm:L1-ref}, which is to construct a bijection between $\L_1(n,k,\ell)$ and $\A(n,k,\ell)$, answers this question completely.

Andrews's original proof of Theorem~\ref{thm:L1} relies on the following identity of Lebesgue \cite{leb40}:
\begin{align}
\sum_{n\ge 0}\frac{(-zq;q)_n}{(q;q)_n}q^{\binom{n+1}{2}}=(-q;q)_{\infty}(-zq^2;q^2)_{\infty}.
\end{align}
In view of this, the first step towards proving the refinement Theorem~\ref{thm:L1-ref} bijectively, is to interprete the Lebesgue identity combinatorially. This has been done several times via different approaches; see for example the works of Alladi-Gordon~\cite{AG}, and Bessenrodt~\cite{bes94}, see also Pak's survey~\cite{pak06} where both proofs were nicely summarized. Later proofs, refinements, and polynomial analogues of Lebesgue identity include Alladi-Berkovich~\cite{AB}, Little-Sellers~\cite{LS09}, Rowell~\cite{row10}, Chen-Hou-Sun~\cite{CHS}, and Dousse-Kim~\cite{DK}. Our bijection, when interpreted in terms of tilings, is equivalent to Little and Sellers's construction \cite{LS09}, which itself has its roots in Alladi and Gordon's work~\cite{AG}. 

Little and Sellers's approach via tilings~\cite{LS09} is intuitive and lends itself to further applications~\cite{LS10}. However, for our convenience, we use words made of three letters $x,y$, and $z$, which correspond to black squares, white squares, and dominoes, respectively. 

The {\it profile} of an integer partition is the south-west to north-east border path in its Ferrers graph. This notion seems to be first introduced by Keith and Nath~\cite{KN}; see also \cite{FT18} and \cite{LXY}, where this way of representing integer partitions proved to be useful. For our purpose here, we write the profiles of partitions in $\L_1(n)$ not as words composed of two letters ($N$ for north and $E$ for east) as it was done in \cite{FT18}, but rather as words composed of three letters $x,y$, and $z$, so that we can tell red parts from green parts. More precisely, as we trace out the profile, we label the steps according to the following rules.
\begin{enumerate}
  \item the two consecutive steps $EN$ forming the corner cell of a certain red part are labeled together as $x$;
  \item the three consecutive steps $EEN$ forming the last two cells of a certain green part are labeled together as $z$;
  \item all remaining east steps are labeled as $y$.
\end{enumerate}
It should be clear from the definition that the profile in terms of the $x$-$y$-$z$ word as described above is uniquely determined by the partition and vice versa. The readers are invited to use the example in Fig.~\ref{fig:profile} to make sure they understand this correspondence. Let $A=\{x,y,z\}$ be our alphabet, and $A^*$ stands for the free monoid generated by the letters from $A$. We denote $\W$ the set of words in $A^*$ that are either empty or end with $x$ or $z$. For each word $u=u_1u_2\cdots u_m\in\W$, we define its {\it weight} to be
\begin{align}\label{word weight}
\omega(u):=\sum_{i=1}^m \chi(u_i\neq y)\cdot (i+|\{j\le i:u_j=z\}|),
\end{align}
where $\chi(S)=1$ if the statement $S$ is true and $\chi(S)=0$ otherwise. For the word $u=xzxyxzyyz$ in Fig.~\ref{fig:profile}, one checks that $\omega(u)=34$. We denote $\W(n,k,\ell)$ the set of words in $A^*$ having weight $n$, wherein the letter $x$ appears $k$ times and the letter $z$ appears $\ell$ times.

\begin{proposition}\label{prop:bij-phi}
 The mapping, say $\phi$, that sends each partition to its profile written as a word in $x,y$, and $z$, is a bijection from $\L_1(n,k,\ell)$ to $\W(n,k,\ell)$.
\end{proposition}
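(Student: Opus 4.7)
The plan is to verify $\phi$ explicitly by establishing three things in turn: (i) the profile of any $\lambda \in \L_1(n,k,\ell)$ parses into legal $x$, $y$, $z$ letters and produces a word in $\W(n,k,\ell)$; (ii) the weight statistic matches, $\omega(\phi(\lambda))=|\lambda|$; and (iii) the letter substitutions $x \mapsto EN$, $y \mapsto E$, $z \mapsto EEN$ provide a well-defined inverse landing back in $\L_1(n,k,\ell)$.

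For (i), I trace the profile of $\lambda$ from its south-west corner upward, parsing row by row. Each row $\lambda_i$ contributes a block of $\lambda_i - \lambda_{i+1}$ east steps followed by a single north step (with $\lambda_{r+1}:=0$, where $r$ is the number of parts). The defining conditions of $\L_1$ are precisely what is needed for the labelling to succeed: distinctness of parts forces each block to contain at least one east step, so a red row is legally marked by placing an $x$ on its trailing $EN$; the gap condition for green parts, together with the prohibition of $1_g$, forces the block of every green row to contain at least two east steps, allowing a $z$ on its trailing $EEN$; the remaining east steps are labelled $y$. The topmost block ends in a north step, so $\phi(\lambda)$ terminates in $x$ or $z$, giving a word in $\W$ with exactly $k$ letters $x$ and $\ell$ letters $z$.

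For (ii), the cleanest bookkeeping is to note that after reading $u_1 \cdots u_i$ the profile has advanced $i + c_i$ east steps, where $c_i := |\{j \le i : u_j = z\}|$, since $x$ and $y$ each contribute one east step while $z$ contributes two. When $u_i \in \{x, z\}$, position $i$ completes a row, whose length is therefore the current column $i + c_i$. Summing these contributions across non-$y$ positions reproduces precisely the definition of $\omega(u)$, so $\omega(\phi(\lambda)) = |\lambda|$.

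For (iii), decoding $u \in \W(n,k,\ell)$ via the reverse substitutions is unambiguous, and I would colour each row red or green according to the $x$ or $z$ terminating its block. Writing $i_1 < i_2 < \cdots$ for the positions of non-$y$ letters, the row lengths $i_q + c_{i_q}$ (read from bottom to top) are strictly increasing in $q$, so the parts are numerically distinct. For $q \ge 2$ with $u_{i_q} = z$, the gap to the row immediately below is $(i_q - i_{q-1}) + (c_{i_q} - c_{i_{q-1}}) = (i_q - i_{q-1}) + 1 \ge 2$; for $q = 1$ with $u_{i_1} = z$, the smallest green part has length $i_1 + 1 \ge 2$, so $1_g$ never appears; red rows need only the automatic distinctness gap. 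Hence the decoded partition lies in $\L_1(n,k,\ell)$ and the two constructions are mutually inverse. The only delicate step is the bookkeeping in (ii), which makes the somewhat unintuitive formula for $\omega(u)$ feel natural; after that, the remaining verifications are mechanical.
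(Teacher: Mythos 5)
Your proof is correct and complete; the paper offers no written argument for this proposition (it is asserted as clear from the definitions, with the reader referred to the example in Fig.~\ref{fig:profile}), and your step-by-step verification --- the parsing of each row's block into $EN$, $EEN$, or stray $E$ steps, the east-step count $i+c_i$ that explains the weight formula $\omega$, and the gap checks showing the decoded partition satisfies the $\L_1$ conditions --- is exactly the intended reasoning made explicit. Nothing further is needed.
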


\begin{figure}[ht]
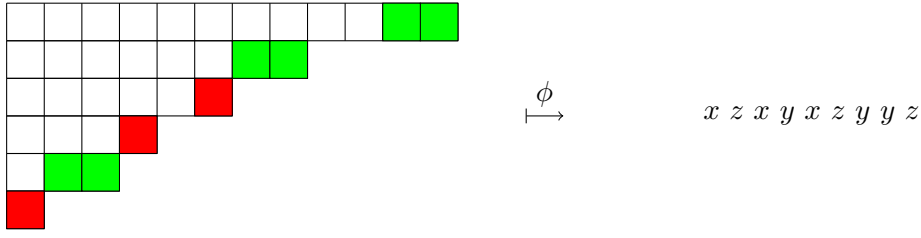

\begin{ferrers}
%\expertmode{13}{-4}
\addcellrows{12+8+6+4+3+1}
\highlightcellbycolor{1}{12}{green}
\highlightcellbycolor{1}{11}{green}
\highlightcellbycolor{2}{7}{green}
\highlightcellbycolor{2}{8}{green}
\highlightcellbycolor{5}{2}{green}
\highlightcellbycolor{5}{3}{green}
\highlightcellbycolor{3}{6}{red}
\highlightcellbycolor{4}{4}{red}
\highlightcellbycolor{6}{1}{red}
\transformto{$\phi$}
\addtext{10.7}{-1.5}{$x~z~x~y~x~z~y~y~z$}
\end{ferrers}
\caption{the profile of partition $\lambda=12_g+8_g+6_r+4_r+3_g+1_r$.}
\label{fig:profile}
\end{figure}

Now to prove Theorem~\ref{thm:L1-ref}, it remains to construct a bijection, say $\psi$, from the set of words $\W(n,k,\ell)$, to the set of two-color partitions $\A(n,k,\ell)$. As alluded to earlier, we can decompose $\A(n,k,\ell)$ further into subsets $\A(n,k,\ell,j)$, $0\le j\le \ell$, i.e., the set of partitions in $\A(n,k,\ell)$ with precisely $j$ red parts no greater than $\ell$. Again, we use $A(n,k,\ell,j)$ to denote its cardinality. On the other hand for the profile words, we further distinguish two types of letter $z$ to account for the extra parameter $j$.
\begin{Def}
For a word $u=u_1u_2\cdots u_m\in A^*$, we say a letter $u_i=z$ is {\it odd (resp.~even)} if for all $1\le j< i$, there is an odd (resp.~even) number of $u_j$ being a $y$ or an odd $z$. For each $0\le j\le \ell$, we denote $\W(n,k,\ell,j)$ the set of words in $\W(n,k,\ell)$ that contain $j$ odd $z$'s.
\end{Def}
Take the word $u=xzxyxzyyz$ in Fig.~\ref{fig:profile} for example, from left to right, the first and the last $z$ are even while the middle $z$ is odd, hence $u\in\W(34,3,3,1)$. Now we are ready to construct the aforementioned bijection $\psi$. Then Theorem~\ref{thm:L1-ref} follows immediately as a direct corollary of Proposition~\ref{prop:bij-phi} and Theorem~\ref{thm:bij-psi}.

\begin{theorem}\label{thm:bij-psi}
There exists a bijection $\psi:\W(n,k,\ell,j)\to\A(n,k,\ell,j)$.
\end{theorem}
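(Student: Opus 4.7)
My plan is to construct $\psi$ as an explicit weight-preserving algorithm on words, in the spirit of Little--Sellers's bijection~\cite{LS09} for Lebesgue's identity. Recall from Proposition~\ref{prop:bij-phi} that each non-$y$ letter $u_i$ carries the column value $c_i = i + |\{t \le i : u_t = z\}|$, and that the multiset $\{c_i : u_i \ne y\}$ gives back the parts of $\lambda = \phi^{-1}(u)$. I would produce $\mu = \psi(u) \in \A(n,k,\ell,j)$ by three contribution rules: (a) each $x$ at position $i$ contributes a ``large'' red part of value $i + \ell$, which is automatically distinct across the $x$'s and strictly larger than $\ell$; (b) each even $z$ contributes one even green part; (c) each odd $z$ contributes both a ``small'' red part in $\{1, \dots, \ell\}$ and one even green part. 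The green and small-red values are computed through a donation scheme: every $z$ passes one unit of weight to each $x$ before it (this compensates for the $+\ell$ shift in rule (a)), and every even $z$ passes one further unit to each odd $z$ before it (an adjustment needed to force the resulting greens to be even).

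I would then verify: (i) weight preservation, which reduces to a direct double-count of donations showing $|\mu| = \sum c_i = \omega(u) = n$; (ii) distinctness and evenness of the $\ell$ greens, where evenness is the parity consequence of the very definition of ``odd $z$'' versus ``even $z$'' combined with the donation rule, and distinctness follows since the relevant formulas are strictly monotone in the position $i$; (iii) distinctness of the $j$ small reds inside $\{1, \dots, \ell\}$; and (iv) invertibility, where $\psi^{-1}$ recovers the $x$ positions from the large reds via $i = r - \ell$, and reconstructs the $z$'s (with their parity classification) and the $y$'s by inverting the donation equations.

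The main obstacle I anticipate is item (iii). The parity required for each odd $z$'s small red depends on the preceding $y$'s and the following even $z$'s, so showing that a canonical choice (e.g., the greedy pick of the smallest-available integer of the correct parity, processed left to right) always succeeds and yields distinct values requires a careful parity-tracking argument tied to the alternating structure of $y$'s and odd $z$'s enforced by the very definition of $\W(n,k,\ell,j)$. Concretely, one must show that across successive odd $z$ positions the required parities interleave compatibly with the pool $\{1,\dots,\ell\}$ so that neither distinctness nor the positivity of each associated green is violated. Once this is in place, invertibility and the remaining verifications reduce to direct bookkeeping, and Theorem~\ref{thm:L1-ref} follows upon composition with $\phi$.
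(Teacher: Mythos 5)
Your overall architecture coincides with the paper's (and with Little--Sellers's): each $x$ at position $i$ becomes a large red part $i+\ell$, each $z$ produces an even green part, and each odd $z$ additionally spawns a small red part in $\{1,\dots,\ell\}$. But there is a genuine gap at exactly the point you flag as the ``main obstacle'': you never commit to formulas for the green parts or the small red parts, and the mechanism you propose for the small reds --- a greedy choice of the smallest available integer of ``the correct parity'' --- is both unproven and misdirected. If the small red part of an odd $z$ must absorb a leftover parity constraint so that its green companion comes out even, you have introduced a genuine choice, and you give no argument that the pool $\{1,\dots,\ell\}$ always contains an unused integer of the required parity, nor that the resulting map is invertible. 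As it stands, items (i)--(iv) of your plan cannot be checked because the objects they refer to are not defined.

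The paper's construction eliminates the choice entirely, and the parity burden falls on the green parts alone. Index the $z$'s from right to left as $u_{s_1},\dots,u_{s_\ell}$ with $s_1>\cdots>s_\ell$. The small red part attached to an odd $z$ at position $s_i$ is simply $i$, its rank among all $z$'s counted from the right (equivalently, its position in the prefix $v_1\cdots v_\ell$ of the auxiliary word $\hat u=v_1\cdots v_\ell u'$); these values are automatically distinct and lie in $\{1,\dots,\ell\}$, with no parity condition whatsoever. The $i$-th green part is the explicit quantity $\sigma_i=s_i-|\{t\le s_i: u_t=x\}|+|\{t\le s_i: u_t\text{ is an even }z\}|$, and this is where the odd/even dichotomy is used: rewriting it as $\sigma_i=|\{t\le s_i: u_t=y\text{ or }u_t\text{ is an odd }z\}|+2\,|\{t\le s_i: u_t\text{ is an even }z\}|$, the first summand is even precisely by the definition of odd versus even $z$. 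Distinctness ($\sigma_i-\sigma_{i+1}\ge 2$), the weight identity $\omega(u)=\omega(\hat u)+|\sigma|$, and invertibility are then direct verifications, the last because every part value is determined by $u$ with no choices made. Until you replace the greedy parity assignment with explicit, choice-free formulas of this kind, the proposal is a program rather than a proof.
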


\begin{corollary}
The composition $\psi\circ\phi$ is a bijection from $\L_1(n,k,\ell)$ to $\A(n,k,\ell)$. In particular, Theorem~\ref{thm:L1-ref} holds true.
\end{corollary}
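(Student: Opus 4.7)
The plan is a clean assemblage of the two preceding results. Proposition~\ref{prop:bij-phi} supplies a bijection $\phi: \L_1(n,k,\ell) \to \W(n,k,\ell)$, while Theorem~\ref{thm:bij-psi} supplies, for each $0 \le j \le \ell$, a bijection $\psi: \W(n,k,\ell,j) \to \A(n,k,\ell,j)$. What remains is to glue these refined bijections together and to transfer the bijective conclusion into the enumerative identity of Theorem~\ref{thm:L1-ref}.

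First I would verify that the fibers $\W(n,k,\ell,j)$ and $\A(n,k,\ell,j)$ genuinely partition their respective parent sets as $j$ ranges over $\{0,1,\ldots,\ell\}$. For $\W(n,k,\ell)$, each of the $\ell$ occurrences of the letter $z$ in a word $u$ is classified unambiguously as odd or even via the left-to-right parity rule, so the statistic ``number of odd $z$'s'' is a well-defined function $\W(n,k,\ell) \to \{0,1,\ldots,\ell\}$ whose fibers are exactly the $\W(n,k,\ell,j)$. On the side of $\A(n,k,\ell)$, Definition~\ref{def:A} together with the definition of $\A(n,k,\ell,j)$ given just before Theorem~\ref{thm:bij-psi} yields the parallel decomposition indexed by the number $j$ of red parts not exceeding $\ell$. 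Both decompositions are disjoint unions by construction.

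Next, I would assemble the $\ell+1$ bijections furnished by Theorem~\ref{thm:bij-psi} into a single bijection $\psi:\W(n,k,\ell)\to\A(n,k,\ell)$ via these disjoint unions; this step is purely formal since a disjoint union of bijections between matching pieces is itself a bijection. Composing with $\phi$ then produces the bijection $\psi\circ\phi:\L_1(n,k,\ell)\to\A(n,k,\ell)$ claimed in the first sentence of the corollary. For the second sentence, I would simply take cardinalities: $L_1(n,k,\ell)=|\L_1(n,k,\ell)|=|\A(n,k,\ell)|=A(n,k,\ell)$, which is exactly the content of Theorem~\ref{thm:L1-ref}.

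There is no substantive obstacle at this final stage; the real work was done in Proposition~\ref{prop:bij-phi} and Theorem~\ref{thm:bij-psi}. The only thing worth emphasising is that both $\phi$ and $\psi$ have been set up to preserve the full refined tuple (total weight $n$, number of red parts $k$, number of green parts / odd $z$'s $\ell$, and the auxiliary parameter $j$), so the assembly into a single bijection is automatic, and the resulting corollary captures the refinement claimed for Theorem~\ref{thm:L1-ref}.
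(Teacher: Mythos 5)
Your proposal is correct and matches the paper's intent exactly: the paper treats this corollary as an immediate consequence of Proposition~\ref{prop:bij-phi} and Theorem~\ref{thm:bij-psi}, and the gluing argument you spell out (the fibers $\W(n,k,\ell,j)$ and $\A(n,k,\ell,j)$ partition their parent sets, the refined bijections assemble into one, and cardinalities transfer) is precisely the routine verification the paper leaves implicit.
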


\begin{proof}[Proof of Theorem~\ref{thm:bij-psi}]
Take any word $u=u_1u_2\cdots u_m\in\W(n,k,\ell,j)$, we aim to derive a partition pair $\psi(u):=(\pi,\sigma)$, where $\pi$ is a distinct partition into $k+j$ parts while $\sigma$ is a distinct partition into $\ell$ even parts. 

First off, we screen the word $u$ from right to left looking for letter $z$, and suppose they are $u_{s_1},u_{s_2},\ldots,u_{s_{\ell}}$ in $u$ with $s_1>s_2>\cdots>s_{\ell}$. Next, for each $i=1,2,\ldots,\ell$, we set
\begin{align*}
\sigma_i &:= s_i-|\{t\le s_i:u_t=x\}|+|\{t\le s_i:\text{$u_t$ is an even $z$}\}|,\text{ and}\\
v_i &:= \begin{cases}
x, & \text{if $u_{s_i}$ is an odd $z$},\\
y, & \text{if $u_{s_i}$ is an even $z$}.
\end{cases}
\end{align*}
We further let $u'$ be the word obtained from $u$ by replacing all letter $z$ by letter $y$, and let $$\hat{u}:=v_1v_2\cdots v_{\ell}u'.$$
Finally, we take $\pi:=\phi^{-1}(\hat{u})$\footnote{Strictly speaking, the images under the mapping $\phi$ should be words ending with $x$ or $z$. So, when $\hat{u}$ ends with $y$, we simply ignore all its ending $y$'s (doing this will not change the weight of $\hat{u}$), and then apply $\phi^{-1}$.} and $\sigma:=\sigma_1+\sigma_2+\cdots+\sigma_{\ell}$, i.e., $\sigma$ is the partition with parts $\sigma_1,\sigma_2,\ldots,\sigma_{\ell}$. With the following claims, we see that $\psi(u)=(\pi,\sigma)$ is indeed a two-color partition in $\A(n,k,\ell,j)$.
\begin{enumerate}
  \item all $\sigma_i$ are even integers and $\sigma_i>\sigma_{i+1}$.
  \item $\hat{u}$ is an $x$-$y$ word containing $k+j$ copies of letter $x$, $j$ of which occur in the prefix $v_1v_2\cdots v_{\ell}$.
  \item $\omega(u)=\omega(\hat{u})+|\sigma|$.
\end{enumerate}
The proofs of the claims above are straightforward verifications. We give some details on the proof of claim (3) here and leave the rest to the interested readers. For the trivial case $\ell=0$, clearly $\hat{u}=u'=u$, and $\sigma=\epsilon$, so (3) holds true. In general for $\ell>0$, we make a letter-by-letter comparison between $u$ and $\hat{u}$. For a certain letter $u_t$ in $u$, if $u_t=y$, then according to \eqref{word weight} it contributes nothing in either $\omega(u)$ or $\omega(\hat{u})$. Otherwise $u_t\neq y$, we go through various ranges for the index $t$ and list in Table~\ref{interval} the corresponding increments in $u_t$'s contributions to the weights going from $u$ to $\hat{u}$.
\begin{table}[h]
{\small
\begin{tabular}{c|c|c|c|c|c|c|c}
\hline
$t>s_1$ & $t=s_1$ & $s_2<t<s_1$ & $t=s_2$ & $s_3<t<s_2$ & $\cdots$ & $t=s_{\ell}$ & $1\le t<s_{\ell}$\\
\hline
$0$ & $-(s_1+\ell)$ & $1$ & $-(s_2+\ell-1)$ & $2$ & $\cdots$ & $-(s_{\ell}+1)$ & $\ell$\\
\hline
\end{tabular}
}
\vspace{2mm}
\caption{Letterwise increments in weights from $u$ to $\hat{u}$.}
\label{interval}
\end{table}
In addition, for the prefix $v_1v_2\cdots v_{\ell}$ of $\hat{u}$, each $v_i$ would incur an increment of $i$ in weight, if and only if $u_{s_i}$ is an odd $z$ in $u$. Summing up all these changes, we have
\begin{align*}
\omega(\hat{u})-\omega(u) &= \sum_{i=1}^{\ell}-(s_i+\ell+1-i)+|\{t\le s_i:u_t=x\}|+\sum_{\substack{1\le i\le \ell \\ \text{$u_{s_i}$ is odd}}}i \\
&= \sum_{i=1}^{\ell}-s_i+|\{t\le s_i:u_t=x\}|-\sum_{\substack{1\le i\le \ell \\ \text{$u_{s_i}$ is even}}}i =-\sum_{i=1}^{\ell} \sigma_i,
\end{align*}
which yields claim (3).

Lastly, it should be clear from our construction (especially the definitions of $\sigma_i$ and $v_i$), that the mapping $\psi$ is invertible, thus a bijection between $\W(n,k,\ell,j)$ and $\A(n,k,\ell,j)$.
\end{proof}

For our running example $u=xzxyxzyyz$ from Fig.~\ref{fig:profile}, one checks that $\pi=8+6+4+2$ and $\sigma=8+4+2$, therefore $\psi(u)=(\pi,\sigma)\in\A(34,3,3,1)$.

\begin{?}
Recall our first proof of Theorem~\ref{thm:L1-ref}, where we have derived the generating function \eqref{id:gfA} for $A(n,k,\ell)$. The same analysis readily gives us the following generating function for the refined $A(n,k,\ell,j)$:
\begin{align*}
\sum_{n,k,\ell,j\ge 0}A(n,k,\ell,j)x^k y^{\ell} z^j q^n &= \sum_{k,\ell\ge 0}\frac{(-zq;q)_{\ell}}{(q;q)_k(q^2;q^2)_{\ell}}x^k y^{\ell}q^{\binom{k+\ell+1}{2}+\binom{\ell+1}{2}}.
\end{align*}
Therefore, it might be interesting to find the generating function for $|\W(n,k,\ell,j)|$, so as to give an analytic counterpart of Theorem~\ref{thm:bij-psi}.
\end{?}

\section{Proof of Theorem~\ref{thm:L2-ref}}\label{sec:L2}
We begin this section by introducing the {\it Durfee square}, which is an important conjugation invariant for integer partition. Suppose $\lambda=\lambda_1+\lambda_2+\cdots$ is a partition, then the Durfee square of $\lambda$ is the largest square that could fit in its Ferrers graph. Alternatively, $\lambda$ has a {\it Durfee square of side $d$}, if and only if $d$ is the largest integer $i$, such that $\lambda_i\ge i$. For example, the partition in Fig.~\ref{fig:profile} (ignoring its colors) has a Durfee square of side $4$. 

Now each partition $\lambda$ can be written as a triple $(d,\pi,\sigma)$, where $d$ is the Durfee side of $\lambda$, and $\pi=\lambda_{d+1}+\lambda_{d+2}+\cdots$ (resp.~$\sigma=\lambda'_{d+1}+\lambda'_{d+2}+\cdots$) is the subpartition below the Durfee square of $\lambda$ (resp.~the conjugate partition $\lambda'$). The following two facts are immediate from the definition.
\begin{itemize}
  \item $|\lambda|=d^2+|\pi|+|\sigma|$;
  \item both $\pi$ and $\sigma$ are partitions with parts no greater than $d$.
\end{itemize}
As our running example, the (uncolored) partition $\lambda=12+8+6+4+3+1$ can be represented as $(4,~3+1,~3+3+2+2+1+1+1+1)$.

Basis partitions were first introduced by Gupta~\cite{gup78} and his definition involves the notion of {\it successive ranks}~\cite[Sect.~9.3]{andtp}. All we need here is the following alternative definition due to Nolan, Savage, and Wilf~\cite{NSW}. See also \cite{hir99} and \cite{and15} for other works related to basis partitions.

\begin{Def}{\cite[Thm.~3]{NSW}}\label{def:basis}
A partition $\lambda=(d,\pi,\sigma)$ is said to be a {\it basis partition}, if and only if $\pi$ and $\sigma$ do not have parts in common.
\end{Def}

For a partition $\lambda=\lambda_1+\lambda_2+\cdots+\lambda_m \in\L_2(n)$, in addition to the standard Ferrers graph, we can also represent $\lambda$ via the {\it $2$-indented Ferrers graph}; see Fig.~\ref{fig:2indent}. We denote $\tilde{\lambda}$ the partition obtained from $\lambda$ by setting
$$\tilde{\lambda}_i:=\lambda_i-2m+2i-1, \text{ for $1\le i\le m$}.$$
Graphically, the $2$-indented Ferrers graph of $\lambda$ is the ordinary Ferrers graph of $\tilde{\lambda}$ with the staircase $(2m-1)+(2m-3)+\cdots+3+1$ attached to its left.

Next, we prove Theorem~\ref{thm:L2-ref} by constructing a direct bijection $\eta$ between $\L_2(n,k,\ell)$ and $\B(n,k,\ell)$, i.e., the set of basis partitions $\lambda=(k+\ell,\pi,\sigma)$ of $n$, wherein $\pi$ has $\ell$ distinct parts.

\begin{proof}[Proof of Theorem~\ref{thm:L2-ref}]
Given a partition $\lambda=\lambda_1+\lambda_2+\cdots+\lambda_m$ in $\L_2(n,k,\ell)$ (here $m=k+\ell$), we proceed to construct its image $\eta(\lambda)$. First we isolate $\tilde{\lambda}$ from its $2$-indented Ferrers graph, then we decompose its conjugate $\tilde{\lambda}'$ into two partitions, $\pi$ and $\sigma$. $\pi$ is consisted of all those parts in $\tilde{\lambda}'$ having the same length as a certain part containing a green cell (i.e., the last cell of a green part in $\lambda$), while the remaining parts of $\tilde{\lambda}'$ constitute $\sigma$. A moment of reflection should reveal that the above way of decomposition guarantees that $\pi$ and $\sigma$ do not have parts in common, and that $\pi$ has $\ell$ distinct parts. Therefore, $\eta(\lambda):=(m,\pi,\sigma)$ is indeed a basis partition in $\B(n,k,\ell)$. The whole process is clearly invertible so we see $\eta$ is a bijection.
\end{proof}

\begin{figure}[ht]
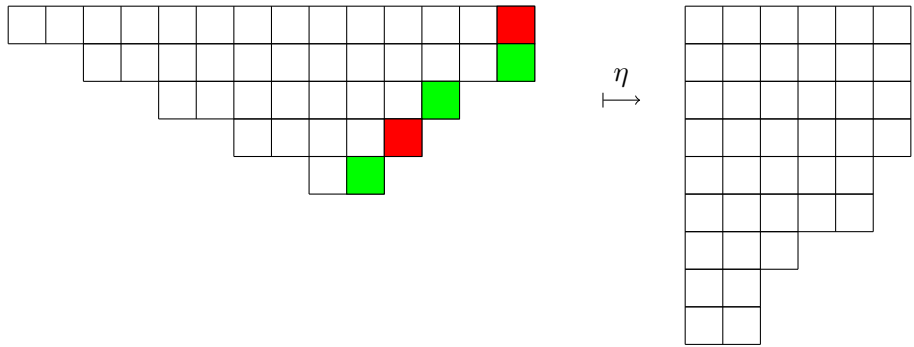

\begin{ferrers}
\addcellrow[0]{14}
\addcellrow[2]{12}
\addcellrow[4]{8}
\addcellrow[6]{5}
\addcellrow[8]{2}
\highlightcellbycolor{1}{14}{red}
\highlightcellbycolor{2}{14}{green}
\highlightcellbycolor{3}{12}{green}
\highlightcellbycolor{4}{11}{red}
\highlightcellbycolor{5}{10}{green}
\transformto{$\eta$}
\putright
\addcellrows{6+6+6+6+5+5+3+2+2}
\end{ferrers}
\caption{the $2$-indented Ferrers graph of $\lambda=14_r+12_g+8_g+5_r+2_g$ and its image $\eta(\lambda)$.}
\label{fig:2indent}
\end{figure}

We should point out that it is also not difficult to give a generating function proof of Theorem~\ref{thm:L2-ref}. The idea is to show that both types of partitions have the same generating function as follows:
\begin{align}\label{gf:L2ref}
\sum_{n,k,\ell\ge 0}\L_2(n,k,\ell)x^k y^{\ell} q^n &=\sum_{n,k,\ell\ge 0}\B(n,k,\ell)x^k y^{\ell} q^n= \sum_{m\ge 0}\frac{(-yq/x;q)_m x^{m}q^{m^2}}{(q;q)_m}.
\end{align}
Setting $x=y=1$ in \eqref{gf:L2ref} recovers the generating function of basis partitions, which was first derived by Nolan-Savage-Wilf~\cite[Coro.~2]{NSW}.

\section{Proof of Theorem~\ref{thm:L3-ref}}\label{sec:L3}

Let $\D(n,k,\ell)$ denote the set of those partitions as described in Theorem~\ref{thm:L3-ref}, i.e., strict partitions of $n$ into $k+2\ell$ parts such that the Durfee square is of side $k+\ell$. Our goal in this section is to construct a direct bijection $\theta:\L_3(n,k,\ell)\to\D(n,k,\ell)$. 

The first thing to notice is that $\L_3(n,k,\ell)\subset \L_2(n,k,\ell)$ from their definitions, but partitions in $\D(n,k,\ell)$ might not be basis partitions so in general $\D(n,k,\ell)\not\subset\B(n,k,\ell)$. Consequently, restricting the bijection $\eta$ onto $\L_3(n,k,\ell)$ is not sufficient. Nonetheless, the main ingredient in the construction of $\theta$ is analogous to that of $\eta$.

\begin{proof}[Proof of Theorem~\ref{thm:L3-ref}]
Given a non-empty partition $\lambda=\lambda_1+\cdots+\lambda_{m}\in\L_3(n,k,\ell)$ with $m=k+\ell$, we explain how to derive its image $\theta(\lambda)$. The first step is the same as that of $\eta$. Namely, we display $\lambda$ using its $2$-indented Ferrers graph and peel off the partition $\tilde{\lambda}$ with
$$\tilde{\lambda}_i:=\lambda_i-2m+2i-1,\text{ for }1\le i\le m.$$
Note that $\tilde{\lambda}$ is a strict ($\tilde{\lambda}_i>\tilde{\lambda}_{i+1}$) partition with either $m-1$ (when $\tilde{\lambda}_{m}=0$) or $m$ (when $\tilde{\lambda}_{m}\neq 0$) parts. Moreover, if $\lambda_i$ is a green part in $\lambda$, then $\tilde{\lambda}_i-\tilde{\lambda}_{i+1}\ge 2$.

Next, we decompose $\tilde{\lambda}'$ into two subpartitions $\pi$ and $\sigma$, where $\pi$ is a strict partition consisted of all those parts in $\tilde{\lambda}'$ containing a green cell, in other words, when $\lambda_i$ is a green part in $\lambda$, then $\lambda'_{\lambda_i}$ is a part assigned to $\pi$. The remaining parts of $\tilde{\lambda}$ form $\sigma$. It is not hard to verify the following facts.
\begin{itemize}
  \item $\pi$ is a strict partition with $\ell$ parts, all of which are no greater than $m$.
  \item If $\lambda_m=1$, then the largest part of $\pi$ is less than $m$.
  \item $\sigma'$ is a strict partition into $m-1$ or $m$ parts, depending on whether $\lambda_m=1$ or not.
  \item $|\lambda|=m^2+|\pi|+|\sigma|$.
\end{itemize}
These facts guarantee that $\theta(\lambda):=(m,\pi,\sigma)$ is indeed a partition in $\D(n,k,\ell)$, expressed using the triple notation (see section~\ref{sec:L2}). It should also be clear how to reverse the above process, so $\theta$ is bijective and the proof is now completed.
\end{proof} 

We note that in \cite{and21}, Andrews's first proof of Theorem~\ref{thm:L3} utilized Sylvester's 1882 identity~\cite[Thm.~9.2]{andtp}:
\begin{align}\label{id:Sylvester}
(-aq;q)_{\infty} &= 1+\sum_{k=1}^{\infty}\frac{a^kq^{(3k^2-k)/2}(-aq;q)_{k-1}(1+aq^{2k})}{(q;q)_k}.
\end{align}
Here the parameter $a$ keeps track of the number of parts in each strict partition. Seeing that $\D(n,k,\ell)$ actually refines the set of strict partitions further by the size of the Durfee square, one can view our Theorem~\ref{thm:L3-ref} as a combinatorial refinement of \eqref{id:Sylvester}. On the other hand, \eqref{id:Sylvester} has seen multi-dimensional extensions recently, first by Alladi~\cite{ali17}, then by Chern-Fu-Tang~\cite{CFT}. With this in mind, it seems plausible that we try to adapt our combinatorial approach to the multi-colored partition case, so as to give new partition-theoretical interpretations of those extended $q$-summations derived in \cite{ali17} and \cite{CFT}.

Another possible line of investigation, is to restrict $\L_d(n,k,\ell)$ further, by bounding the largest part of each two-color partition. This so-called ``finitization'' would presumably provide us with new polynomial analogues of the corresponding $q$-series identities, such as it has been done in \cite{BU} and \cite{unc21}.

\section*{Acknowledgement}
The author was partly supported by the National Natural Science Foundation of China grant 12171059 and the Natural Science Foundation Project of Chongqing (No.~cstc2021jcyj-msxmX0693).

\end{document}